\newcommand{\N}{\mathbb{N}}
\renewcommand{\P}{\mathbb{P}}
\newcommand{\set}[1]{\{#1\}}
\newcommand{\setof}[2]{\left\{#1 \ | \ #2 \right\}}
\renewcommand{\v}[2]{V_{#1}(#2)}
\newcommand{\proj}[2]{\pi_{#1}(#2)}
\newcommand{\vr}[1]{\mathbf{#1}}
\newcommand{\Def}{\textnormal{Def}}
\newcommand{\lin}{skolemian}
\newcommand{\slin}{semiskolemian}
\newtheorem{thm}{Theorem}
\newtheorem{cl}[thm]{Claim}
\newtheorem{lemma}[thm]{Lemma}
\title{Definable sets in Skolem arithmetic}
\author{Łukasz Kamiński}
\begin{document}
\maketitle

\begin{abstract}
    In this note, we present a characterization of sets definable in Skolem arithmetic,
    i.e., the first-order theory of natural numbers with multiplication.
    This characterization allows us to prove the decidability of the theory.
    The idea is similar to that of Mostowski; however, our characterization is new,
    and the proof relies on different combinatorial tools.
    The main goal of this note is to provide a simpler decidability proof than those previously known.
\end{abstract}

\subsection*{Introduction}

Skolem arithmetic (denoted by $Sk$) is the 
first order theory of the structure of
natural numbers with multiplication, 
i.e., $\langle \N; \cdot, = \rangle$.
The decidability of this theory
was first claimed by Skolem \cite{skolem},
but the first complete proof 
was published by Mostowski \cite{mos52}.
A few years later Feferman and Vaught
published a generalization of 
Mostowski's result \cite{fv59}.
Subsequently, the decidability
of Skolem arithmetic was also 
proven using quantifier elimination
in some extended language \cite{ceg81}.
There are also some other proofs
\cite{rackoff_skolem,nad81,hodgson}.

Mostowski’s method proceeds by characterizing sets definable 
in strong and weak powers of first-order theories; 
in particular, 
Skolem arithmetic can be regarded as a weak power of Presburger arithmetic \cite{presburger}.
In that framework the principal technical challenge is the elimination of quantifiers.

In this note we restrict attention to Skolem arithmetic itself.
However, the presented characterization of definable sets 
could be also extended to general weak powers.
While the high-level idea is reminiscent of Mostowski’s, 
the characterization and several proof steps differ: 
in our setting quantifier elimination is routine, and the main difficulty is handling
negations (complements).

\subsection*{Preliminaries}

\paragraph*{Vectors}

Let $\P$ be the set of prime numbers.
Fix $d \in \N$.
We use the following notational convention: 
for vectors $\vr u, \vr v \in \N^d$
by $\vr u \vr v$ we denote their pointwise product,
i.e., 
\begin{equation*}
    \begin{pmatrix}
    u_1 \\ \vdots \\ u_d
    \end{pmatrix}
    \begin{pmatrix}
        v_1 \\ \vdots \\ v_d
    \end{pmatrix}
    = 
    \begin{pmatrix}
        u_1v_1 \\ \vdots \\ u_d v_d
    \end{pmatrix}.
\end{equation*}
Furthermore, for $p \in \P$
and $\vr v = (v_1, \ldots, v_d)^T \in \N^d$
let 
$
    p^{\vr v} = 
    \begin{pmatrix}
        p^{v_1},
        \ldots, 
        p^{v_d}
    \end{pmatrix}^T
$.
For $N \in \N$ and $p \in \P$
we denote by $\v{p}{N}$ the greatest $n \in \N$
such that $p^n | N$.
For a vector $\vr v = (v_1, \ldots, v_d)^T \in \N^d$
we denote $\v{p}{\vr v} = (\v{p}{v_1}, \ldots, \v{p}{v_d})^T$.
Observe that $\prod_{p \in \P} p^{\v{p}{\vr v}} = \vr v$.

Let $[d] = \set{1, \ldots, d}$.
For $\vr v \in \N^d$ and a coordinate $x \in [d]$
we denote by $\proj{x}{\vr v}$ the vector obtain from $\vr v$
by projecting away the coordinate $x$.
Moreover, for $S \subseteq \N^d$
let $\proj{x}{S} = \setof{\proj{x}{\vr v}}{\vr v \in S} \subseteq \N^{d-1}$.
We use a convention that $\N^0 = \set{\varepsilon}$,
where $\varepsilon$ is the unique empty tuple,
and we denote $\top = \set{\varepsilon}$.
For $S \subseteq \N$ and a coordinate $x=1$
let $\proj{x}{S} = \top$ if $S \ne \emptyset$
and $\proj{x}{S} = \emptyset$ otherwise.

Let $i,j \in [d]$.
We denote by $\vr e_i, \vr e_{ij} \in \N^d$
the vectors defined as follows:
\begin{equation*}
    \vr e_{ij}(k) =
    \begin{cases}
        1, \text{ if } k = i \text{ or } k = j \\
        0, \text{ if } k \ne i,j
    \end{cases}
    \qquad
    \vr e_{i}(k) =
    \begin{cases}
        1, \text{ if } k = i \\
        0, \text{ if } k \ne i
    \end{cases}
    .
\end{equation*}
We assume a convection that $\vr e_{ij} = \vr e_i$ if $i = j$.

\paragraph*{Graph matchings}

In this paragraph we present a graph theoretic lemma that we will use.
Recall that $G = (L \cup R, E)$ is a bipartite graph if
$E \subseteq L \times R$.
A \emph{matching} 
of a set $S \subseteq L \cup R$ of nodes is a set 
$M \subseteq E$ of pairwise non-adjacent edges that covers all nodes in $S$.

\begin{lemma}[{\cite[Lem.~4]{HLT17}}]\label{lem:matching}
    Let $G = (L \cup R, E)$ be a bipartite graph. 
    If there is a matching of $L' \subseteq L$ and a matching of $R' \subseteq R$
    then there is a matching of $L' \cup R'$.
\end{lemma}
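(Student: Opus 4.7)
The plan is to use the standard alternating-paths/cycles decomposition applied to $H := M_L \cup M_R$, where $M_L$ is the given matching of $L'$ and $M_R$ the given matching of $R'$. Since every vertex has degree at most $2$ in $H$, its connected components are vertex-disjoint paths and cycles with edges alternating between $M_L$ and $M_R$ (an edge lying in both matchings is just a single-edge component). I will build the required matching by selecting, for each component $C$, a matching $M_C \subseteq E(C)$ that saturates $(L' \cup R') \cap V(C)$, and taking $M := \bigcup_C M_C$.

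On cycles, alternation forces even length and both $M_L \cap C$ and $M_R \cap C$ are perfect matchings of $C$, so either choice works. In any component, every internal vertex has degree $2$ in $H$, hence is incident to one $M_L$ edge and one $M_R$ edge and is covered no matter which matching we pick. So the only work is at the endpoints of path components. Classify an endpoint $v$ of a path $C$ as an \emph{$L$-endpoint} or an \emph{$R$-endpoint} according to which of $M_L$, $M_R$ saturates it within $C$. The key observation is that an $L$-endpoint cannot lie in $R'$ (it is not saturated by $M_R$) and symmetrically an $R$-endpoint cannot lie in $L'$. So if both endpoints of $C$ are of the same type, the corresponding matching restricted to $C$ is perfect on $C$ and does the job.

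The delicate case, which I expect to be the main obstacle, is a path $C$ with one $L$-endpoint $a$ and one $R$-endpoint $b$. Here alternation with first edge in $M_L$ and last edge in $M_R$ forces $|E(C)|$ to be even, so by bipartiteness $a$ and $b$ lie on the same side of $L \cup R$. If $a, b \in L$, then $b$ is an $R$-endpoint sitting in $L$: it is not in $L'$ (not saturated by $M_L$) and not in $R'$ (wrong side), so $b \notin L' \cup R'$, and $M_L \cap C$ covers every other vertex of $C$ and in particular every vertex of $(L' \cup R') \cap V(C)$. If $a, b \in R$, then symmetrically $a \notin L' \cup R'$ and $M_R \cap C$ works. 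Combining all the chosen $M_C$ gives $M := \bigcup_C M_C$, a matching of $L' \cup R'$ as required.
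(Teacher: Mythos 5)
Your proof is correct. Note that the paper does not prove this lemma at all: it is quoted from the cited reference, so there is no in-paper argument to compare against. Your argument — take the union $M_L \cup M_R$, decompose into alternating paths and cycles, and on each component pick the matching that leaves uncovered only a vertex that cannot lie in $L' \cup R'$ (an $R$-endpoint is unsaturated by $M_L$, hence outside $L'$, and if it lies in $L$ it is outside $R'$ as well, and symmetrically) — is exactly the standard Mendelsohn--Dulmage-style proof, which is also how the cited source argues, and all the cases (cycles, same-type endpoints, mixed endpoints forcing even length and same side, single-edge and isolated components) are handled soundly.
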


\subsection*{Definable sets}

We assume that the reader is familiar
with Presburger arithmetic and semilinear sets
\cite{presburger,semilin}.
We say that a set $S \subseteq \N^d$
is \emph{\lin} if there exist
semilinear sets 
$\alpha_1, \ldots, \alpha_n, \alpha \subseteq \N^d$
with $\vr 0 \in \alpha$ and $\vr 0 \not\in \alpha_i$ for $i \in [n]$
such that
\begin{equation*}
    S = 
    \setof{
        p_1^{\vr v_1} \ldots p_n^{\vr v_n} q_1^{\vr u_1} \ldots q_k^{\vr u_k}
    }{
        k \in \N, \
        \vr v_i \in \alpha_i, \
        \vr u_j \in \alpha \text{ for } i \in [n], j \in [k]
    },
\end{equation*}
where $p_1, \ldots, p_n, q_1, \ldots, q_n$
range over pairwise distinct prime numbers.
In other words, $\vr w \in S$ if and only if
there exist pairwise distinct primes $p_1, \ldots, p_n$ such that $\v{p_i}{\vr w} \in \alpha_i $ for $i \in [n]$
and $\v{q}{\vr w} \in \alpha$ for all other $q \in \P$ with $\v{q}{\vr w} \neq \vr 0$.
We say that formulas $\alpha_1, \ldots, \alpha_n, \alpha$
\emph{define} the \lin{} set $S$,
and we write $S = \Def(\alpha_1, \ldots, \alpha_n; \alpha)$.
In particular, if $n=0$ then we write $S = \Def(\alpha)$.

A set $S$ is \emph{\slin} if there are
\lin{} sets $S_1, \ldots, S_m$ 
such that $S = \bigcup_{i=1}^m S_m$.

\smallskip

Now we are ready to formulate the main theorem of this note.
\begin{thm}\label{thm:main}
    A subset of $\N^d$ is definable in Skolem arithmetic
    exactly when it is \slin. 
    Furthermore, given a formula $\Phi$ of Skolem arithmetic,
    the \slin{} representation of the set defined by $\Phi$
    is computable.
\end{thm}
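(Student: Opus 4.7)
I would prove both inclusions of Theorem \ref{thm:main}, with effectivity falling out of the fact that every step uses only algorithmic operations: Boolean combinations and projections of semilinear sets, together with finite case analyses on the ``slot'' structure of \lin{} representations.

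For the direction \slin{} implies definable, the ingredients are standard. In Skolem arithmetic one can define ``$p$ is prime'', divisibility, and the graph of the $p$-adic valuation, so every Presburger formula on valuations translates uniformly into a Skolem formula. Given a \lin{} representation $\Def(\alpha_1, \ldots, \alpha_n; \alpha)$ the corresponding Skolem formula asserts that there exist pairwise distinct primes $p_1, \ldots, p_n$ with $\v{p_i}{\vr w}$ satisfying the Presburger formula defining $\alpha_i$, together with the universal clause that for every other prime $q$, $\v{q}{\vr w}$ satisfies the formula defining $\alpha$. Finite unions handle the \slin{} case.

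For the converse I would induct on the structure of the formula. Atomic formulas $x \cdot y = z$ and $x = y$ translate coordinatewise to the Presburger conditions $v_x + v_y = v_z$ and $v_x = v_y$ on each prime's valuation, giving single \lin{} sets with $n = 0$ (where $\vr 0 \in \alpha$ is automatic). Closure of \slin{} sets under union is by definition, and closure under existential quantification reduces to projecting the semilinear sets $\alpha_i, \alpha$ along the quantified coordinate, using the conventions from the preliminaries in the degenerate case.

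The principal obstacle is closure under complement; intersection then follows by De Morgan. My plan is to recast membership in a \lin{} set as a conjunction of two matching conditions, which Lemma \ref{lem:matching} lets me treat independently. Associate to $\vr w$ the bipartite graph $G_{\vr w}$ with left side $L = [n]$, right side the primes dividing $\vr w$, and edges $(i, p)$ iff $\v{p}{\vr w} \in \alpha_i$. Put $B(\vr w) = \setof{p \in \P}{\v{p}{\vr w} \notin \alpha}$, a finite set since $\vr 0 \in \alpha$. Then $\vr w \in \Def(\alpha_1, \ldots, \alpha_n; \alpha)$ iff $L$ admits a matching in $G_{\vr w}$ and $B(\vr w)$ admits a matching in $G_{\vr w}$; one direction is immediate, and the other is exactly Lemma \ref{lem:matching}, which merges the two matchings into a single matching covering $L$ whose image contains $B(\vr w)$. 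Negating this equivalence yields the complement as a union of the failures of the two matching conditions, and each failure unfolds via Hall's criterion into a finite disjunction of counting statements of the form ``there exist fewer than $k$ distinct primes $p$ with $\v{p}{\vr w}$ in a prescribed semilinear set''. Such counting statements are themselves \slin{} (they express ``exactly $j$ special primes'' for $j < k$, using copies of the semilinear set as special slots and its complement as the generic set), so the complement is \slin{} and the whole construction remains effective.
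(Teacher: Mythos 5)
Your overall route is the same as the paper's: matching characterization of membership via the bipartite graph and Lemma \ref{lem:matching}, Hall's criterion to unfold the two failure modes into prime-counting conditions, coordinatewise translation of the atomic formulas, and projection of the semilinear data for existential quantification. However, there is a genuine gap in the Boolean bookkeeping: you propose to obtain closure under intersection from complement by De Morgan, but your complement argument only shows that the complement of a single \lin{} set is \slin{}. In the induction, negation must be applied to an arbitrary \slin{} set, i.e.\ a finite union $\bigcup_i S_i$ of \lin{} sets, and $(\bigcup_i S_i)^c = \bigcap_i S_i^c$ is a finite intersection of \slin{} sets. To conclude that this is \slin{} you need closure of \slin{} sets under intersection --- which you claimed to derive from complement. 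That is circular. The paper avoids this by proving intersection closure directly and first: the intersection of two \lin{} sets $\Def(\alpha_1,\ldots,\alpha_n;\alpha)$ and $\Def(\beta_1,\ldots,\beta_m;\beta)$ is written as a finite union over all ``correct'' pairings $P \subseteq (\set{0}\cup[n])\times(\set{0}\cup[m])$ of the special slots (pairing each $\alpha_i$ either with some $\beta_j$ or with $\beta$, and vice versa), yielding $\bigcup_{P} \Def\bigl((\alpha_i\cap\beta_j)_{(i,j)\in P};\alpha\cap\beta\bigr)$; this is not obtainable from union and complement, and the paper explicitly notes that the complement proof relies on it. Your plan needs this (or an equivalent) direct intersection argument inserted before the complement step.

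A smaller imprecision: you describe the Hall unfolding as producing only statements of the form ``fewer than $k$ primes have valuation in a prescribed semilinear set''. The failure of condition (b) has this form, but the failure of condition (a) naturally unfolds into ``at least'' statements (at least $n+1$ primes with valuation outside $\alpha$, or at least $n'$ primes with valuation in $\alpha^c\cap\bigcap_{i\in I}\alpha_i^c$ for a suitable $I$), which are \lin{} sets of the form $\Def(\beta,\ldots,\beta;\N^d)$. Both kinds are indeed \slin{}, so this is a matter of stating the right counting conditions rather than a flaw in the method, but as written the description of case (a) does not match what Hall's theorem gives you.
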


We will omit the proof of the easier implication: 
every \slin{} set is definable in Skolem arithmetic.
Now we focus on the reverse implication.
The proof is by induction.
If suffices to show that atomic formulas define 
\slin{} sets,
 and if sets defined by formulas
$\Phi$, $\Psi$ are \slin{} then also 
sets defined by $\Phi \wedge \Psi$, $\neg \Phi$ and $\exists_X \Phi$ are so,
assuming that $X$ is a free variable of $\Phi$.
The three properties are equivalent 
to the following closure properties of \slin{} sets:
closure under intersection, complement and 
projecting away a coordinate.

Note that we could prove that \slin{} sets 
are closed under union instead of intersection.
The closure under union follows directly from the definition
of \slin{} sets. 
However, we prove the closure under intersection anyway,
as the proof of closure under complement
will use this fact.

\subsection*{Closure properties}

This section is devoted to the proofs of closure 
properties of \slin{} sets.
We start with proving that atomic formulas
of Skolem arithmetic define \slin{} sets.

\begin{itemize}
    \item {\bf Equality}

    Suppose that $S \subseteq \N^d$
    is defined by a formula $x_i=x_j$,
    where $x_i, x_j$ are variables corresponding to coordinates $i,j \in [d]$, respectively.
    Let $\alpha$ be a semilinear set defined by
    a Presburger formula $x_i = x_j$.
    Roughly speaking, two nonnegative integers 
    are equal if and only if each prime
    appears with the same exponent in the factorization
    of both. Hence,
    \begin{equation*}
        S = \Def(\alpha).
    \end{equation*}

    \item {\bf Multiplication}
    
    This case is similar to the previous one.
    Suppose that $S \subseteq \N^d$
    is defined by a formula $x_i \cdot x_j = x_k$,
    where $x_i, x_j, x_k$ are variables corresponding to coordinates $i,j,k \in [d]$, respectively.
    Note that $i,j,k$ are not necessarily pairwise different.
    Let $\alpha$ be a semilinear set defined by a Presburger formula
    $x_i + x_j = x_k$.
    We observe that
    \begin{equation*}
        S = \Def(\alpha).
    \end{equation*}

    \item {\bf Intersection}
    
    Observe that 
    $
    \left(\bigcup_{i \in I} S_i\right) \cap \left(\bigcup_{j \in J} S_j'\right) 
    = \bigcup_{i \in I, j \in J} S_i \cap S_j'
    $.
    Hence, it suffices to prove that intersection of two \lin{} sets
    is \slin{}.
    
    Let $S = \Def(\alpha_1, \ldots, \alpha_{n}; \alpha)$
    and
    $S' = \Def(\beta_1, \ldots, \beta_{m}; \beta)$
    for semilinear sets $\alpha_1, \ldots, \alpha_{n}, \alpha$
    and $\beta_1, \ldots, \beta_{m}, \beta$.
    We call a subset $P$ of $(\set{0} \cup [n]) \times (\set{0}\cup [m])$ 
    \emph{correct}
    if the following condition is satisfied:
    for every $i \in [n]$ there exists exactly one $j \in \set{0} \cup [m]$
    such that $(i, j) \in P$
    and 
    for every $j \in [m]$ there exists exactly one $i \in \set{0} \cup [n]$
    such that $(i, j) \in P$.
    Denote by $\mathcal{P}$ the family of all correct sets.
    Then the set $S \cap S'$ is represented as 
    \slin{} set as follows
    \begin{equation*}
        S \cap S' = \bigcup_{P \in \mathcal{P}} 
        \Def\left(\left(\alpha_i \cap \beta_j\right)_{(i,j) \in P}; \alpha \cap \beta \right).
    \end{equation*}
    Indeed, if $P$ is correct then
    any element of
    $\Def\left(\left(\alpha_i \cap \beta_j\right)_{(i,j) \in P}; \alpha \cap \beta \right)$
    belongs to both $S$ and $S'$.
    On the other hand, for every $\vr w \in S \cap S'$
    there exists $P \in \mathcal{P}$ which witness 
    that $\vr w$ belongs to the above \slin{} set.

    \item {\bf Complement}
    
    For a set $A \subseteq \N^d$ let $A^c = \N^d \setminus A$.
    Observe that $(\bigcup_{i = 1}^{m} S_i)^c = \bigcap_{i=1}^m S_i^c$.
    As we already proved that \slin{} sets are closed under intersection,
    it suffices to show that a complement of a \lin{} set is \slin{}.

    Let $\vr w \in \N^d$ and $S = \Def(\alpha_1, \ldots, \alpha_n; \alpha)$.
    Let $P$ be the set of primes $p$ satisfying
    $\v{p}{\vr w} \neq \vr 0$ and $P_0 \subseteq P$
    be its subset that consists of primes $p$ such that
    $\v{p}{\vr w} \not\in \alpha$.
    Consider a bipartite graph $G = (P \cup [n], E)$, 
    where $(p, i) \in E$ if and only if
    $\v{p}{\vr w} \in \alpha_i$.
    We will use the following fact.
       
    \begin{cl}\label{cl:suff}
        $\vr w \in S$ if and only if the following conditions are satisfied:
%
    \begin{enumerate}
        \item[(a)] 
        there is a matching of $P_0$ in $G$;
        \item[(b)]
        there is a matching of $[n]$ in $G$.

    \end{enumerate}
    \end{cl}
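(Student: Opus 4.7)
The plan is to prove both directions of the claim by translating between two kinds of data: a witness that $\vr w \in S$ (namely a choice of pairwise distinct primes $p_1,\ldots,p_n$ playing the roles of $\alpha_1,\ldots,\alpha_n$ in the definition of $\Def$), and matchings in the bipartite graph $G$. The observation driving the translation is that an edge $(p,i)$ in $G$ is exactly the statement $\v{p}{\vr w} \in \alpha_i$, so a matching of $[n]$ is exactly an assignment of distinct primes that simultaneously satisfies all the $\alpha_i$'s.

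For the forward direction, suppose $\vr w \in S$, so there exist pairwise distinct primes $p_1,\ldots,p_n$ with $\v{p_i}{\vr w} \in \alpha_i$ for each $i$ and $\v{q}{\vr w} \in \alpha$ for every other prime $q$ with $\v{q}{\vr w} \neq \vr 0$. Since $\vr 0 \notin \alpha_i$, each $p_i$ lies in $P$, so $i \mapsto p_i$ is a matching of $[n]$ in $G$, proving (b). For (a), note that if $p \in P_0$ then $\v{p}{\vr w} \notin \alpha$, so $p$ must coincide with some $p_i$; mapping each such $p \in P_0$ to its corresponding $i$ yields a matching of $P_0$ in $G$.

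For the backward direction, assume (a) and (b). Apply Lemma~\ref{lem:matching} to $G$ to combine the matchings of $P_0$ and of $[n]$ into a single matching $M$ covering $P_0 \cup [n]$. Let $p_i$ be the prime matched to $i$ by $M$; the $p_i$'s are pairwise distinct and $\v{p_i}{\vr w} \in \alpha_i$. Since $M$ also covers $P_0$, any $p \in P_0$ is matched to some $i$ and is therefore one of the $p_i$'s; equivalently, every remaining $q \in P$ lies outside $P_0$, which by definition means $\v{q}{\vr w} \in \alpha$. Using $\prod_{p \in \P} p^{\v{p}{\vr w}} = \vr w$, this exhibits $\vr w \in \Def(\alpha_1,\ldots,\alpha_n;\alpha) = S$.

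The only nontrivial step is the merging of the two matchings, which is precisely what Lemma~\ref{lem:matching} provides; without that lemma one might independently produce matchings of $P_0$ and of $[n]$ whose combined edges fail to be a matching, and the claim would break. Everything else is bookkeeping that reads witnesses off of $M$ and checks the definition of $\Def$.
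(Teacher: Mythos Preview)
Your proof is correct and follows essentially the same approach as the paper: the forward direction is unwound from the definition of $\Def$, and the backward direction hinges on applying Lemma~\ref{lem:matching} to merge the two matchings into one covering $P_0 \cup [n]$, from which the witnessing primes $p_1,\ldots,p_n$ are read off. Your treatment of the forward direction is more explicit than the paper's (which simply says ``observe''), but the argument is the same.
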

    \begin{proof}
        Observe that if $\vr w \in S$ then conditions (a),(b) are satisfied.
        Let us focus on the opposite direction.

        From Lemma \ref{lem:matching} we conclude that
        there exists a matching $M$ of $P_0 \cup [n]$ in $G$.
        Let $p_i \in P$ be the prime such that $(p_i, i) \in M$ for $i \in [n]$.
        Observe that $\v{q}{\vr w} \in \alpha$
        for every $q \in P \setminus P_0$.
        Therefore,
        \begin{equation*}
            \vr w = 
            \prod_{i \in [n]} p_i^{\v{p_i}{\vr w}} 
            \prod_{q \in P \setminus \set{p_1, \ldots, p_n}} q^{\v{q}{\vr w}}
            \in S.
        \end{equation*}
        Indeed, $\v{p_i}{\vr w} \in \alpha_{i}$ for $i \in [n]$
        and $\v{q}{\vr w} \in \alpha$ for $q \in P \setminus \set{p_1, \ldots, p_n}$.
    \end{proof}

    We continue the proof that a complement of a \lin{} set is \slin{}.
    Let $S = \Def(\alpha_1, \ldots, \alpha_n; \alpha)$.
    Then $\vr w \not\in S$ if and only if one of conditions (a),(b)
    from Claim \ref{cl:suff} is not satisfied.
    Let $S_a, S_b \subseteq \N^d$ be the sets of vectors 
    that does not satisfy conditions (a), (b) respectively.
    As $S^c = S_a \cup S_b$, it suffices to show that sets
    $S_a, S_b$ are \slin{}.
    \begin{itemize}
        \item (a) fails
        
        There may be two reasons why (a) does not hold.
        The first one is that $|P_0| > n$.
        We define the set of vectors $\vr w$
        such that $\v{p}{\vr w} \not\in \alpha$
        for at least $n+1$ primes $p$:
        \begin{equation*}
            \Def(\underbrace{\alpha^c, \ldots, \alpha^c}_{n+1}; \N^d).
        \end{equation*} 
        In the second case we assume that there is no
        matching of $P_0$ in $G$ but $|P_0| \leq n$.
        From the Hall's theorem
        we conclude that there exists $P_0' \subseteq P_0$
        with $|P_0'| = n' \leq n$ such that for less than
        $n'$ indexes $i \in [n]$ the condition
        $p \in \alpha_i$ is satisfied for some $p \in P_0'$.
        Equivalently,
        there is $n-n'+1$ sets out of
        $\set{\alpha_1, \ldots, \alpha_n}$
        and $n'$ primes $p \in P$
        such that $\v{p}{\vr w}$ neither belongs to any of these sets nor to $\alpha$.
        For $I \subseteq [n]$ let
        $\beta_I := \alpha^c \cap \bigcap_{i \in I} \alpha_i^c$.
        Then we define the set of vectors $\vr w$ satisfying this condition
        as follows:
        \begin{equation*}
            \bigcup_{n' \leq n} \
            \bigcup_{\substack{I \subseteq [n] \\ |I| = n-n'+1}}
            \Def(\underbrace{\beta_I, \ldots, \beta_I}_{n'}; \N^d).
        \end{equation*}

        \item (b) fails
        
        Again we use the Hall's theorem to conclude that
        there exists $I \subseteq [n]$ with the property that there is
        less than $|I|$ primes $p \in P$
        such that $\v{p}{\vr w}$ belongs to some set from the family $\setof{\alpha_i}{i \in I}$.
        Let $\gamma_I = \bigcup_{i \in I} \alpha_i$. Then
        \begin{equation*}
            S_b = \bigcup_{I \subseteq [n]} \ \bigcup_{n' < |I|}
            \Def(\underbrace{\gamma_I, \ldots, \gamma_I}_{n'}; \gamma_I^c).
        \end{equation*}
    \end{itemize}

    \item {\bf Projecting away a coordinate}
    
    It suffices to prove that a set obtained by
    projecting away a coordinate from a \lin{} set
    is \lin{}.
    Suppose that
    $S = \Def(\alpha_1, \ldots, \alpha_n ; \alpha)$
    and $x \in [d]$ is a coordinate.
    Then
    \begin{equation*}
        \proj{x}{S} = \Def(\proj{x}{\alpha_1}, \ldots, \proj{x}{\alpha_n} ; \proj{x}{\alpha}).
    \end{equation*}
\end{itemize}

\subsection*{Conclusion}

Theorem \ref{thm:main} implies
the following decidability result.

\begin{thm}\label{thm:dec}
    Skolem arithmetic is decidable.
\end{thm}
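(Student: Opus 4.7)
The plan is to read the decidability directly off the effective part of Theorem \ref{thm:main}. A sentence $\Phi$ of Skolem arithmetic has $d=0$ free variables, so it defines a subset $S \subseteq \N^0 = \set{\varepsilon}$; by the conventions in the preliminaries, $\Phi$ is true in $\langle \N; \cdot, = \rangle$ iff $S = \top$, equivalently iff $S \neq \emptyset$. By Theorem \ref{thm:main} a \slin{} representation of $S$ as a finite union of \lin{} sets $\Def(\alpha_1, \ldots, \alpha_n; \alpha)$ can be computed from $\Phi$, so the entire task reduces to deciding whether the resulting \slin{} set is empty.

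I would then handle nonemptiness of a \slin{} set by reducing to nonemptiness of each \lin{} component $\Def(\alpha_1, \ldots, \alpha_n; \alpha)$ separately. Because $\P$ is infinite one can always select $n$ pairwise distinct primes, and taking $k=0$ in the defining expression shows that such a component is nonempty exactly when every $\alpha_i$ is nonempty as a semilinear set. Nonemptiness of semilinear sets is decidable via the decidability of Presburger arithmetic, a classical result. Chaining these steps produces an algorithm that first computes the \slin{} representation guaranteed by Theorem \ref{thm:main} and then performs finitely many Presburger nonemptiness tests on its components.

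The main conceptual work has already been done in Theorem \ref{thm:main}; the only point that deserves a quick sanity check, rather than constituting a genuine obstacle, is the zero-dimensional case itself. Every semilinear subset of $\N^0$ is either $\top$ or $\emptyset$, and the conditions $\vr 0 \in \alpha$ and $\vr 0 \notin \alpha_i$ then force $\alpha = \top$ and $\alpha_i = \emptyset$, so a \lin{} component inside $\N^0$ is nonempty iff it has $n=0$. This is immediately readable from the representation, so the decision procedure reduces in this base case to scanning the computed union for a summand with no distinguished $\alpha_i$'s.
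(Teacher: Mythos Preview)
Your proof is correct and follows the same overall strategy as the paper: compute the \slin{} representation of the set defined by the formula via Theorem~\ref{thm:main} and then read the answer off that representation. The paper's own proof actually treats the slightly more general problem of deciding $\psi(a_1,\ldots,a_d)$ for given inputs, and for that it invokes the matching characterization of Claim~\ref{cl:suff} applied to the finitely many primes dividing the input; only at the end does it specialize to sentences, noting that $S=\top$ iff $\alpha_1=\cdots=\alpha_n=\top$ (which, in view of the constraint $\vr 0\notin\alpha_i$, is exactly your condition $n=0$). Your route bypasses Claim~\ref{cl:suff} entirely and goes straight to the zero-dimensional analysis, which is a small simplification for the theorem as stated, at the cost of not simultaneously yielding the decidability of the membership problem for formulas with parameters that the paper picks up along the way.
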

\begin{proof}
    Let $\psi (x_1, \ldots, x_d)$ be a formula in the language 
    of Skolem arithmetic.
    The goal is to decide if $Sk \models \psi(a_1, \ldots, a_d)$
    for a given $a_1, \ldots, a_d \in \N$.
    Due to theorem \ref{thm:main}
    there is an effective procedure
    to compute the set defined by $\psi$,
    $S = \Def(\alpha_1, \ldots, \alpha_n ; \alpha)$
    for some semilinear sets $\alpha_1, \ldots, \alpha_n, \alpha$.
    Hence, it suffices to check if $\vr a = (a_1, \ldots, a_d)^T \in S$.
    The set of primes $p$ for which $\v{p}{\vr a} \ne \vr 0$ is finite,
    so it is enough to check if conditions from Claim \ref{cl:suff} are satisfied.

    Note that this procedure works also if $\psi$ is a sentence,
    i.e., $d=0$. 
    In that case we need to check if $S = \top$,
    which holds if and only if 
    $\alpha_1 = \ldots = \alpha_n = \top$.
\end{proof}

\bibliography{bib}

\end{document}